\documentclass[12pt]{amsart}
\usepackage{mathrsfs}
\usepackage[colorlinks]{hyperref}
\usepackage{xcolor}
\usepackage[a4paper,asymmetric]{geometry}
\usepackage{mathscinet}
\usepackage{fullpage}
\usepackage{latexsym}
\usepackage{amsthm}
\usepackage{amssymb}
\usepackage{amsfonts}
\usepackage{amsmath}
\newtheorem{theorem}{Theorem}[section]
\newtheorem{thm}[theorem]{Theorem}

\newtheorem{lem}[theorem]{Lemma}
\newtheorem{proposition}[theorem]{Proposition}

\newtheorem{corollary}[theorem]{Corollary}

\theoremstyle{definition}

\newtheorem{defn}[theorem]{Definition}

\theoremstyle{remark}

\newtheorem{rem}[theorem]{Remark}
\numberwithin{equation}{section}

 \DeclareMathAlphabet{\mathpzc}{OT1}{pzc}{m}{it}
 

 \newcommand{\E}{\mathbb{E}}            

 \newcommand{\p}{\partial}

 \newcommand{\R}{\mathbb{R}}

 \newcommand{\mcl}{\mathcal}
 
 \newcommand{\Be}{\begin{equation}}
 \newcommand{\Ee}{\end{equation}}
 \newcommand{\Bs}{\begin{split}}
 \newcommand{\Es}{\end{split}}
  \newcommand{\Bes}{\begin{equation*}}
 \newcommand{\Ees}{\end{equation*}}
 \newcommand{\BT}{\begin{thm}}
 \newcommand{\ET}{\end{thm}}
 \newcommand{\dif}{\mathrm{d}}
 \newcommand{\Bp}{\begin{proof}}
 \newcommand{\Ep}{\end{proof}}
 \newcommand{\BL}{\begin{lem}}
 \newcommand{\EL}{\end{lem}}
 \newcommand{\BP}{\begin{proposition}}
 \newcommand{\EP}{\end{proposition}}
 \newcommand{\BC}{\begin{corollary}}
 \newcommand{\EC}{\end{corollary}}
 \newcommand{\BR}{\begin{rem}}
 \newcommand{\ER}{\end{rem}}
 \newcommand{\BD}{\begin{defn}}
 \newcommand{\ED}{\end{defn}}
 \newcommand{\BI}{\begin{itemize}}
 \newcommand{\EI}{\end{itemize}}
 
 \newcommand{\tl}{\tilde}




\begin{document}
\title
[On some  smoothening effects of the transition semigroup  of  a  L\'evy process]
{On some  smoothening effects of the transition semigroup  of  a  L\'evy process}

\author{Zhao Dong}
\address{Institute of Applied Mathematics, Academy of Mathematics and Systems Sciences, Academia Sinica, P.R.China}
\email{dzhao@amt.ac.cn}

\author{Szymon Peszat}
\address{Institute of Mathematics, Polish Academy of Sciences, \'Sw. Tomasza 30/7, 31-027 Cracow, Poland}
\email{napeszat@cyf-kr.edu.pl}

\author{Lihu Xu}
\address{Department of Mathematics, Faculty of Science and Technology University of Macau, Av. Padre Tom\'{a}s Pereira, Taipa Macau, China}
\email{lihuxu@umac.mo}

 \thanks{ZD is supported by Key Laboratory of Random Complex Structures and Data Science, Academy of Mathematics and Systems Science, CAS (No.2008DP173182), NSFC(No. 11271356), NSFC(No. 10721101), 973 Program (2011CB808000). LX is supported by the grant SRG2013-00064-FST.  SP is supported by Polish National Science Center grant  DEC2013/09/B/ST1/03658.  
 The authors would like to gratefully thank Prof. Zhen-Qing Chen and Dr. Jieming Wang for stimulating discussions, and  Profs. Jiahong Wu and Zhifei Zhang for their helpful discussions on fractional Laplacian and Campanato's theorem.}

\begin{abstract}
Let $(P_t)$ be the transition semigroup of a L\'evy process $L$ taking values in a Hilbert space $H$. Let $\nu$ be the L\'evy measure of $L$. It is shown that for any bounded and measurable function $f$, 
$$
\int_H\left\vert P_tf(x+y)-P_tf(x)\right\vert ^2 \nu (\dif y)\le \frac 1 t P_tf^2(x) \qquad \text{for all $t>0$, $x\in H$.}  
$$
As  $\nu$ can be infinite this formula establishes some smoothening effect of the semigroup $(P_t)$.  In the paper some applications of the formula  will be presented  as well. 
\end{abstract}

\maketitle

\section{Introduction}
Let $(X_t)$ be the solution to an SDE on a Hilbert space $H$   driven by a non-degenerate Wiener process $W$.  Let   
$$
P_tf(x)= \mathbb{E}\left( f(X_t)\vert X_0=x\right), \qquad  f\in B_b(H), \ t\ge 0, 
$$ 
be the corresponding transition semigroup defined on the space of bounded measurable functions $B_b(H)$. Then   the following   Bismut--Elworthy--Li formula holds (see \cite{ElLi94} or \cite{PeZa95})
$$
\langle \nabla P_tf(x),h\rangle _H=\frac 1t\mathbb{E}\left( f(X_t) \int_0^t K(s;h) \dif W_s\vert X_0=x\right), 
$$
where $K(s,h)$ is an adapted stochastic processes independent of $f$. This formula implies the strong Feller property of $(P_t)$, and therefore is very useful for studying its ergodic properties. For other applications we refers readers to  e.g.  \cite{WaXu12, BWY13, EcHa01}. 

In this paper, we prove  that the transition semigroup  $(P_t)$ of a L\'evy process $L$ taking values in a Hilbert space $H$ exhibits some smoothening effects, namely $P_t$ transforms $B_b(H)$ into the intersection of domains of some non-local operators (see Theorem \ref{T31} for more details). The proof of Theorem \ref{T31}   is very simple and follows  \cite{ElLi94}. Next, see  Corollary \ref{C32}, we will show that 
\begin{equation}\label{E11}
\int_H\left\vert P_tf(x+y)-P_tf(x)\right\vert ^2 \nu (\dif y)\le \frac 1 t P_tf^2(x) \quad \text{for  $f\in B_b(H)$,  $t>0$, $x\in H$.}  
\end{equation}
Above  $\nu $ is the L\'evy measure of $L$. Note that if $\nu$ is infinite, then for any open ball $B_\varepsilon(0)$ with the center at $0$ and radius  $\varepsilon >0$ one has $\nu\left(B_\varepsilon(0)\right)=+\infty$.  Therefore  \eqref{E11} means that for any $x\in H$,  $P_tf(x+y)$, $y\in B_\varepsilon(0)$, is in a certain sense close to $P_tf(x)$.  

As applications of our general result, we obtain a short time estimate for the semigroup generated by fractional Laplacian and a `fractional gradient' estimate for the perturbed stable type stochastic systems considered in  \cite{ChWa12}. Finally using generalized Campanato's theorem, we calculate modulus of continuity of the transition semigroups of  '$\log$ $\alpha$-stable' processes.

The paper is organized as  follows: the next section includes some preliminary facts on  L\'evy processes. The main general results are formulated in Section \ref{SMain}. The last three sections are devoted to applications.  In  the appendix we  prove  the generalized Campanato theorem of  harmonic analysis.

\section{Preliminary facts on L\'evy processes} 
We shall recall here some preliminary facts on L\'evy processes (for details see e.g. \cite{Ap04, PeZa07,Sa90}). Let $(L_t)_{t \ge 0}$ be an $H$-valued L\'evy process.  It is well known that there is a vector $m\in H$, a symmetric positive definite trace class operator $Q\colon H\mapsto H$, and a Borel measure $\nu$ on $H$ satisfying  
\begin{equation}\label{E21}
\nu(\{0\})=0, \qquad \int_{H}1\wedge \vert y\vert _H^2\nu( \dif y)<+\infty, 
\end{equation}
such that 
$$
\mathbb{E}\,  {\mathrm e}^{\mathrm{i}\langle x,   L_t\rangle_H}={\mathrm e}^{-t\psi(x)},\qquad x\in H, 
$$
where the so-called \emph{L\'evy symbol}  $\psi$ of $(L_t)$ is given by
$$
\psi(x)=i\langle x, m\rangle _H+ \frac 12 \langle Qx,  x\rangle_H + \int_{H}\left[{\mathrm e}^{\mathrm{i}\langle x, y\rangle_H}-1-\mathrm{i}\langle x,   y\rangle _H\mathbf{1}_{\{\vert y\vert _H\leq 1\}}\right]\nu( \dif y). 
$$
We call $\nu$ the \emph{L\'evy measure} of $L$ and $(m,Q,\nu)$ the \emph{generating triplet} of $L$. 

The \emph{Poisson random measure} associated with $(L_t)$ is defined by
$$
N(t,\Gamma):=\sum_{s\in(0,t]}\mathbf{1}_{\Gamma}(L_s-L_{s-}),\qquad \Gamma\in  \mathcal{B}(H), \ t>0, 
$$
and the \emph{compensated Poisson random measure} is given by
$$
\tilde N(t,\Gamma)=N(t,\Gamma)-t\nu(\Gamma).
$$
By the\emph{ L\'evy--Khinchin decomposition} (cf. \cite[p.108, Theorem 2.4.16]{Ap04} or \cite[p. 53, Theorem 4.23]{PeZa07}), one has
$$
L_t=mt + W_Q(t)+ \int_{\{0<\vert x\vert _H\leq 1\}}x\tilde N(\dif t,\dif x)+\int_{\{\vert x\vert _H>1\}}x N(\dif t,\dif x), \qquad t\ge 0, 
$$
where $W_Q$ is a  Wiener process in $H$ with covariance operator $Q$. 

Let $(\mathfrak{F}_t)$ be the filtration generated by $(L_t)$, and let us  denote by $\mathcal{L}^2_{\mathrm loc}$ the space of all predictable stochastic process $\psi$ satisfying   
$$
\mathbb{E} \int_0^t \int_{H}\vert \psi(s,y)\vert _H^2 \nu(\dif y) \dif s<\infty \qquad \text{for   $t>0$.}
$$
Then for any $\psi\in \mathcal{L}^2_{\mathrm loc}$ the stochastic integral  $\int_0^t \int_{H} \psi(s,y) \tilde N(\dif s, \dif y)$ is a well-defined square integrable and mean zero martingale. Moreover, the following It\^o  isometry holds (see e.g. \cite[p. 200]{Ap04} or \cite[Section 8.7]{PeZa07})
\begin{equation} \label{E22}
\begin{aligned}
&\mathbb{E}\left[\int_0^t \int_{H} \psi(s,y) \tilde  N(\dif s, \dif y) \int_0^t \int_{H} \varphi(s,y) \tilde N(\dif s, \dif y)\right]\\
&\quad =\mathbb{E} \int_0^t \int_{H} \psi(s,y) \varphi(s,y) \nu(\dif y) \dif s, \qquad \psi,\varphi\in \mathcal{L}^2_{\mathrm loc}.
\end{aligned}
\end{equation}

Let $L=(L_t)$ be a L\'evy process with a generating triplet $(m,Q,\nu)$. Consider the Markov family 
\begin{equation}\label{E23}
L^x_t= x+L_t, \qquad t\ge 0, \ x\in H. 
\end{equation}
Its transition semigroup $(P_t)$ is given as follows 
\begin{equation}\label{E24}
P_tf(x)= \mathbb{E}\, f(L^x_t), \qquad t\ge 0, \ x\in H,\ f\in B_b(H). 
\end{equation}
Observe that $(P_t)$ is $C_0$ on the space $UC_b(H)$ of uniformly continuous bounded functions on $H$, see \cite[p. 80]{PeZa07}. Moreover, the domain of its generator $\mathcal L$ contains the space $UC^2_b(H)$, and 
$$
\begin{aligned}
&\mathcal Lf(x)= \langle Df(x),m\rangle _H + \frac 12 \text{\rm Trace}\, QD^2f(x)\\
&\  + \int_{H}\left(f(x+y)-f(x)-\mathbf{1}_{\{\vert x\vert _H<1\}}\langle Df(x), y\rangle_H\right)\nu(\dif y), \qquad f\in UC^2_b(H), \  t\ge 0, \ x\in H. 
\end{aligned}
$$

\section{Main results}\label{SMain}
Let $L$ be a L\'evy process on $H$ with the generation triplet $(m,Q,\nu)$. Let $(L^x_t)$, $t\ge 0$, $x\in H$ be the  Markov family given by \eqref{E23}, and let $(P_t)$ given by \eqref{E24} be the transition semigroup of $(L^x_t)$. 

Given  $q\in L^2(H,\mathcal{B}(H), \nu)$, define 
$$
\mathcal{D}_q:=\left\{f\in B_b(H)\colon \sup_{x\in H}\int_{H} \left\vert f(x+y)-f(x)\right\vert \vert q(y)\vert \nu(\dif y)<\infty\right\}.
$$
Next, let 
$$
A_q f(x): = \int_{H} \left[f(x+y)-f(x)\right]q(y)\nu(\dif y)\qquad  \text{for $f\in \mathcal{D}_q$, $x\in H$.} 
$$
Taking into account  \eqref{E21}, we see that $C^1_b(H)\subset \mathcal{D}_q$, $A_q$ is a bounded linear operator form $C^1_b(H)$ into $B_b(H)$ and 
\begin{align*}
\|A_qf\|_\infty &:= \sup_{x\in H}|A_q f(x)| \\
&\le 2 \|f\|_\infty \left(\int_{|y|_H\ge 1}  \nu(\dif y)\right)^{\frac12}\left(\int_{|y|_H\ge 1} q^2(y) \nu(\dif y)\right)^{\frac12}
\\
&\quad + \|Df\|_\infty \left(\int_{\{|y|_H<1\}}  |y|_H^2 \nu(\dif y)\right)^{\frac12}\left(\int_{\{|y|_H<1\}} q^2(y) \nu(\dif y)\right)^{\frac12} <\infty.
\end{align*}
\begin{theorem}\label{T31}
Let $q\in L^2(H,\mathcal{B}(H), \nu)$. Then for all $t>0$ and $f\in B_b(H)$, 
$P_tf\in \mathcal{D}_q$ and 
$$
\left\vert A_q P_tf(x)\right\vert ^2  \le \frac 1t P_tf^2(x) \int_{H} q^2(y)\nu(\dif y)\qquad \text{for all $x\in H$. }
$$
\end{theorem}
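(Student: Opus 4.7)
The plan is to mimic the Bismut--Elworthy--Li construction, but replacing the Wiener integral by an integral against the compensated Poisson random measure. I pair $f(L^x_t)$ with the auxiliary martingale
$$
Z_t:=\int_0^t\!\int_H q(y)\,\tilde N(\dif s,\dif y),
$$
which, by \eqref{E22}, is square integrable with $\mathbb{E}\,Z_t^2=t\int_H q^2(y)\,\nu(\dif y)$. The main inequality will follow from the representation
$$
A_qP_tf(x)=\tfrac{1}{t}\,\mathbb{E}\lf[f(L^x_t)\,Z_t\rg],
$$
after which Cauchy--Schwarz immediately yields $|A_qP_tf(x)|^2\le t^{-1}P_tf^2(x)\int_H q^2(y)\,\nu(\dif y)$.

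To derive this representation I set $M_s:=P_{t-s}f(L^x_s)$ for $s\in[0,t]$. By the Markov property of $(L^x_s)$ together with the semigroup property of $(P_t)$, $(M_s)$ is a bounded martingale on $(\mathfrak{F}_s)$ with $M_0=P_tf(x)$ and $M_t=f(L^x_t)$. The L\'evy--Kunita martingale representation then yields
$$
M_t-M_0=\int_0^t\phi_s\,\dif W_Q(s)+\int_0^t\!\int_H\psi(s,y)\,\tilde N(\dif s,\dif y),
$$
with predictable jump kernel $\psi(s,y)=P_{t-s}f(L^x_{s-}+y)-P_{t-s}f(L^x_{s-})$. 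Since $W_Q$ is independent of $N$, the Brownian integral is orthogonal to $Z_t$, so applying \eqref{E22},
$$
\mathbb{E}[f(L^x_t)Z_t]=\mathbb{E}[(M_t-M_0)Z_t]=\mathbb{E}\int_0^t\!\int_H\psi(s,y)\,q(y)\,\nu(\dif y)\,\dif s.
$$
The translation invariance of $L$ gives $\mathbb{E}[P_{t-s}f(L^x_s+y)]=(P_sP_{t-s}f)(x+y)=P_tf(x+y)$, whence $\mathbb{E}\psi(s,y)=P_tf(x+y)-P_tf(x)$; a Fubini step then closes the representation.

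For the membership $P_tf\in\mathcal{D}_q$, I apply Jensen's inequality to $\psi$ and integrate against $\nu(\dif y)\,\dif s$ using the isometry, obtaining
$$
t\int_H\lf(P_tf(x+y)-P_tf(x)\rg)^2\nu(\dif y)\le\mathbb{E}(M_t-M_0)^2\le P_tf^2(x)\le\norm{f}_\infty^2
$$
uniformly in $x\in H$. A final Cauchy--Schwarz step bounds the defining supremum by $t^{-1/2}\norm{f}_\infty\norm{q}_{L^2(H,\nu)}$, giving membership.

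The main obstacle is justifying the martingale representation, in particular identifying the jump kernel $\psi$ explicitly, when $f$ is only bounded and measurable. For $f\in UC^2_b(H)$ this is a direct application of It\^o's formula to $(s,z)\mapsto P_{t-s}f(z)$, which solves the backward Kolmogorov equation driven by $\mathcal{L}$. For general $f\in B_b(H)$ I would approximate pointwise by $f_n\in UC^2_b(H)$ uniformly bounded by $\norm{f}_\infty$, establish the key identity for each $f_n$, and pass to the limit using bounded convergence in the outer expectation together with dominated convergence in the It\^o isometry \eqref{E22}.
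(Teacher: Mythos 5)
Your proposal is correct and follows essentially the same route as the paper: Itô's formula applied to $P_{t-s}f(L^x_s)$ for $f\in UC^2_b(H)$, pairing with the auxiliary Poisson martingale $Z_t=\int_0^t\int_H q(y)\tilde N(\dif s,\dif y)$, the isometry \eqref{E22} to obtain $tA_qP_tf(x)=\mathbb{E}[f(L^x_t)Z_t]$, Cauchy--Schwarz, and an approximation/limit argument for general $f\in B_b(H)$ (the paper uses Fatou where you use dominated convergence, an immaterial difference).
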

\begin{proof} Assume that $f\in UC^2_b(H)$. Applying It\^{o}'s formula (see e.g. \cite{Ap04} or \cite{PeZa07}) we obtain 
\begin{equation} \label{E31}
\begin{split}
f(L_t^x)-P_t f(x)&=-\int_0^t \mathcal  L P_{t-s} f(L^x_s)\dif s +\int_0^t  P_{t-s} \mathcal  L f(L^x_s) \dif s\\
& \ \ \ +\int_0^t \int_{H}\left[P_{t-s}f(L^x_s+y)-P_{t-s}f(L^x_s))\right] \tilde N(\dif y,\dif s) \\
& \ \ \ +\int_0^t \langle DP_{t-s}f(L^x_s), \dif W_Q(s)\rangle _H\\
&=\int_0^t \int_{H}\left[P_{t-s}f(X_s+y)-P_{t-s}f(X_s)\right] \tilde N(\dif y,\dif s)\\
& \ \ \ +\int_0^t \langle DP_{t-s}f(L^x_s), \dif W_Q(s)\rangle_H, 
\end{split}
\end{equation}
where the last equality is because $\mathcal L P_t f=P_t \mathcal L f$ for $f \in UC^2_b(H)$. Multiplying the both sides of \eqref{E31} by 
$$
\int_0^t \int_{H} q(y) \tilde N(\dif y, \dif s)
$$
and taking into account  \eqref{E22}, we further get
\begin{align*}
\mathbb{E} \left[f(L^x_t) \int_0^t \int_{H} q(y) \tilde  N(\dif y, \dif s)\right] 
&=\mathbb{E}\int_0^t \int_{H} \left[P_{t-s}f(L^x_s+y)-P_{t-s}f(L^x_s)\right] q(y)\nu(\dif y) \dif s\\
&=\int_0^t \int_{H} \left[P_sP_{t-s}f(x+y)-P_s P_{t-s}f(x)\right] q(y)\nu(\dif y) \dif s\\
&=t \int_{H} \left[P_{t}f(x+y)-P_{t}f(x)\right] q(y)\nu(\dif y)= tA_qP_tf(x). 
\end{align*}
Thus, by the H\"older inequality and It\^o isometry we obtain 
$$
\begin{aligned}
t\left\vert A_qP_tf(x)\right\vert &\le \left( \mathbb{E}\, f^2(L^x_t)\right)^{1/2}
\left(\int_0^t \int_H q^2(y)\nu(\dif y)\dif s\right)^{1/2}\\
&\le \left(P_tf^2(x)\right)^{1/2} t^{1/2} \left( \int_H q^2(y)\nu(\dif y)\right)^{1/2}.
\end{aligned}
$$
Thus the desired estimate holds for any $f\in UC_b^2(H)$. Assume that $f\in B_b(H)$. Let $x\in H$. Then there is a sequence $(f_n)\subset UC^2_b(H)$ such that 
$$
\lim_{n\to \infty} P_tf_n^2(x)=P_tf^2(x), 
$$
and 
$$
\lim_{n\to \infty} P_tf_n(x+y)=P_tf(x+y)\quad \text{for $\nu$ almost all  $y$.} 
$$
Consequently, the desired estimate for $f$ follows from the Fatou lemma. 
\end{proof}
\begin{corollary} \label{C32}For arbitrary  $f\in B_b(H)$ we have 
$$
\int_{H} \left\vert P_t f(x+y)-P_tf(x)\right\vert ^2 \nu(\dif y) \le  \frac 1tP_tf^2(x), \qquad \text{$x\in H$, $t>0$.}
$$
\end{corollary}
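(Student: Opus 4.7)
The plan is to derive the corollary from Theorem \ref{T31} by a duality argument, choosing the test function $q$ to equal (a truncation of) the integrand itself. Fix $t>0$, $x\in H$, $f\in B_b(H)$, and set
$$
g(y) := P_tf(x+y) - P_tf(x),\qquad y \in H.
$$
Formally, if we were allowed to take $q=g$ in Theorem \ref{T31}, then $A_q P_tf(x) = \int_H g(y)^2\,\nu(\dif y)$, and the inequality would give $\bigl(\int g^2 d\nu\bigr)^2 \le \tfrac1t P_tf^2(x) \int g^2 d\nu$, i.e.\ exactly the claim. The only issue is that $g$ is a priori only bounded (by $2\|f\|_\infty$) but not in $L^2(H,\mathcal{B}(H),\nu)$ when $\nu$ is infinite, so one cannot feed it directly into Theorem \ref{T31}.

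To fix this, I would introduce the cut-offs $A_n := \{y\in H : |y|_H \ge 1/n\}$ and set $q_n(y) := g(y)\mathbf{1}_{A_n}(y)$. By \eqref{E21} we have $\nu(A_n) < \infty$, and since $|q_n|\le 2\|f\|_\infty$ this forces $q_n \in L^2(H,\mathcal{B}(H),\nu)$. Applying Theorem \ref{T31} with $q = q_n$ gives
$$
\left(\int_{A_n} g(y)^2\,\nu(\dif y)\right)^{\!2} \;=\; \bigl|A_{q_n}P_tf(x)\bigr|^2 \;\le\; \frac1t P_tf^2(x)\int_{A_n} g(y)^2\,\nu(\dif y).
$$
If $\int_{A_n}g^2\,d\nu=0$ there is nothing to show; otherwise I divide both sides by this quantity to obtain $\int_{A_n}g^2\,d\nu \le \tfrac1t P_tf^2(x)$.

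Since $A_n\uparrow H\setminus\{0\}$ and $\nu(\{0\})=0$, the monotone convergence theorem lets one pass to the limit $n\to\infty$ on the left-hand side and conclude
$$
\int_{H}\bigl|P_tf(x+y)-P_tf(x)\bigr|^2\,\nu(\dif y) \;\le\; \frac1t P_tf^2(x).
$$
There is no real obstacle here beyond the bookkeeping: the whole argument is a duality trick, and the only subtlety is that one cannot plug $g$ directly into Theorem \ref{T31} when $\nu$ is infinite, which is precisely what the truncation $\mathbf{1}_{A_n}$ circumvents thanks to the Lévy measure integrability property \eqref{E21}.
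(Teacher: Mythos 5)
Your proof is correct and takes essentially the same route as the paper: the paper deduces the corollary from Theorem \ref{T31} via the $L^2(\nu)$-duality identity $\int_H|g(y)|^2\,\nu(\dif y)=\sup\{|A_qP_tf(x)|^2:\ \|q\|_{L^2(\nu)}\le 1\}$ with $g(y)=P_tf(x+y)-P_tf(x)$, which is precisely your choice $q\propto g$ in disguise. Your explicit truncation $q_n=g\,\mathbf{1}_{\{|y|_H\ge 1/n\}}$ (using \eqref{E21} to see $q_n\in L^2(\nu)$, then dividing and passing to the limit by monotone convergence) is simply a careful justification of that identity when $g$ is not known a priori to lie in $L^2(\nu)$, so it is, if anything, slightly more detailed than the paper's one-line argument.
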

\begin{proof} Since 
$$
\int_{H} \left\vert P_tf(x+y)-P_tf(x)\right\vert ^2\nu(\dif y)= \sup \left\{ \left\vert A_q P_tf(x)\right\vert ^2, \quad q\colon \int_{H}q^2(y)\nu(\dif u)\le 1\right\}
$$
the estimate follows from Theorem \ref{T31}.
\end{proof}

Given  $f\in B_b(H)$ we  define the  difference  operator $\nabla^{n}_{y_1,\ldots, y_n}f(x)$, $x, y_1,\ldots,y_n\in H$ putting 
$$
\nabla _yf(x)= f(x+y)-f(x),
$$
$$
\nabla ^{n+1}_{y_1,\ldots,y_{n+1}}f(x)= \nabla _{y_{n+1}}\left(\nabla ^{n}_{y_1,\ldots,y_{n}}f\right)(x). 
$$
\begin{corollary}\label{C33}
For any $f\in B_b(H)$ and $n\in \mathbb{N}$, 
$$
\sup_{x\in H} \int_{H}\ldots\int_{H} \left\vert \nabla_{y_1,\ldots,y_n} \left(P_tf\right)(x)\right\vert ^2 \nu(\dif y_1)\ldots \nu(\dif y_n)\le \left(\frac{n}{t}\right)^n \|f\|_{\infty}^2. 
$$
\end{corollary}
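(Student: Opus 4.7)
The plan is to prove the estimate by induction on $n$, using Corollary \ref{C32} together with the observation that the difference operator $\nabla_y$ commutes with each $P_s$. The base case $n=1$ is immediate from Corollary \ref{C32} and the pointwise bound $P_tf^2(x)\le \|f\|_\infty^2$. For the inductive step from $n-1$ to $n$, the key is to exploit the semigroup property by splitting $P_t= P_{t/n}\circ P_{(n-1)t/n}$, so that one of the $n$ differences can be absorbed by the ``short'' factor $P_{t/n}$ via Corollary \ref{C32}, while the remaining $n-1$ differences are handled by the inductive hypothesis applied to the ``long'' factor $P_{(n-1)t/n}f$.

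The commutation identity I will use is $\nabla_y P_s g(x) = P_s(\nabla_y g)(x)$, which follows from $P_sg(x)=\mathbb{E}\, g(x+L_s)$ and the fact that the noise does not depend on the spatial variable. Iterating, $\nabla^{n-1}_{y_2,\ldots,y_n}$ commutes with $P_{t/n}$, so one can write
$$
\nabla^n_{y_1,\ldots,y_n} P_tf(x) = \nabla_{y_1} P_{t/n}\bigl(h(\cdot;y_2,\ldots,y_n)\bigr)(x),\qquad h(x;y_2,\ldots,y_n):=\nabla^{n-1}_{y_2,\ldots,y_n}P_{(n-1)t/n}f(x).
$$
Applying Corollary \ref{C32} to the function $x\mapsto h(x;y_2,\ldots,y_n)$ with time $t/n$ yields
$$
\int_H \bigl|\nabla_{y_1}P_{t/n}h(x;y_2,\ldots,y_n)\bigr|^2 \nu(\dif y_1) \le \frac{n}{t}\,P_{t/n}\bigl(h^2(\cdot;y_2,\ldots,y_n)\bigr)(x).
$$

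Integrating this inequality against $\nu(\dif y_2)\cdots\nu(\dif y_n)$ and interchanging the order of integration via Tonelli's theorem (everything is nonnegative), I obtain
$$
\int_H\!\!\!\cdots\!\int_H \bigl|\nabla^n_{y_1,\ldots,y_n}P_tf(x)\bigr|^2\nu(\dif y_1)\cdots\nu(\dif y_n) \le \frac{n}{t}\,P_{t/n}\!\left[\int_H\!\!\!\cdots\!\int_H h^2(\cdot;y_2,\ldots,y_n)\,\nu(\dif y_2)\cdots\nu(\dif y_n)\right]\!(x).
$$
The inductive hypothesis, applied with $n-1$ differences and time $(n-1)t/n$, supplies the uniform bound
$$
\sup_{x\in H}\int_H\!\!\!\cdots\!\int_H h^2(x;y_2,\ldots,y_n)\,\nu(\dif y_2)\cdots\nu(\dif y_n)\le\Bigl(\tfrac{n-1}{(n-1)t/n}\Bigr)^{n-1}\|f\|_\infty^2 = \Bigl(\tfrac{n}{t}\Bigr)^{n-1}\|f\|_\infty^2,
$$
and since $P_{t/n}$ preserves pointwise bounds, combining the two factors yields the desired constant $(n/t)^n\|f\|_\infty^2$.

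The only mildly delicate points I anticipate are verifying the commutation $\nabla_y P_s=P_s\nabla_y$ (routine, from spatial homogeneity of $L$), and justifying the Tonelli interchange, which is harmless because the integrand is nonnegative and the family is finite. Everything else is bookkeeping of the constants: the geometric telescoping $(n-1)/((n-1)t/n)=n/t$ is what makes the bound $(n/t)^n$ come out cleanly rather than with a larger prefactor, so the specific choice of splitting $P_t=P_{t/n}\circ P_{(n-1)t/n}$ is essential.
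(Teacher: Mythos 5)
Your proof is correct and follows essentially the same route as the paper: both split $P_t$ into $n$ copies of $P_{t/n}$ (your inductive split $t = t/n + (n-1)t/n$ unwinds to exactly this), commute the difference/nonlocal operators past the semigroup using spatial homogeneity, and apply the one-step estimate to each short-time factor. The only cosmetic difference is that you invoke Corollary \ref{C32} directly and organize the argument as an induction, whereas the paper works with the dual operators $A_{q_i}$, iterates Theorem \ref{T31}, and recovers the iterated integral by taking suprema over $q_i$ in the unit ball of $L^2(H,\mathcal{B}(H),\nu)$.
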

\begin{proof} It is enough to show the  estimate for $f\in UC^2_b(H)$. Let $q_1,\ldots, q_n\in L^2(H,\mathcal B(H), \nu)$. We claim that the operators $A_{q}$ and $P_s$ commute. Indeed,  by the Feynman--Kac representation of $P_t$, the Fubini theorem and the fact
$X_t(x)+y=X_t(x+y)$, for all $f \in UC^2_b(\R^d)$ we have
\begin{align*}
P_t A_q f(x)&=\E \, A_q f(X_t(x))=
\E \, \int_{H} \left(f(X_t(x)+y)-f(X_t(x))\right)q(y)\nu(\dif y) \\
&=\int_{H} \left[\E f(X_t(x)+y)-\E f(X_t(x))\right]q(y)\nu(\dif y) \\
&=\int_{H} \left[P_tf(x+y)-P_tf(x)\right]q(y)\nu(\dif y)\\
&=A_qP_t f(x).
\end{align*}
Thus by Theorem \ref{T31}, 
\begin{align*}
\|A_{q_1}\ldots A_{q_n} P_tf\|^2_{\infty} &= \sup_{x\in H} \left\vert A_{q_1}\ldots A_{q_n} P_tf(x)\right\vert ^2 \\
&= \sup_{x\in H} \left\vert (A_{q_1}P_{t/n})\ldots (A_{q_n} P_{t/n})f(x)\right\vert ^2 \\
&\le \frac nt \|(A_{q_1}P_{t/n})\ldots (A_{q_n} P_{t/n})f\|^2_{\infty}. 
\end{align*}
\end{proof}

\section{Application 1: Short time behaviour of the semigroup}
We shall study the fractional gradient estimate of $\alpha$-stable and truncated $\alpha$-stable processes. In this and next sections $H=\mathbb{R}^d$. The norm on $\mathbb{R}^d$ will be denoted by $\vert\cdot \vert$. 
\begin{theorem}
Let the L\'evy measure $\nu$ be of the form
$$
\nu(\dif x)=\frac{1}{|x|^{d+\alpha}} \mathbf{1}_{\{|x|<K\}}\dif x,
$$
where $\alpha \in (0,2]$ and $K \in (0, \infty]$. Then for any $\beta \in (\alpha/2, \alpha)$ we have
\begin{equation} \label{e:DelPtf}
\|(-\Delta)^{\frac{\alpha-\beta}2} P_t f\|_\infty \le  C(1+t^{-1/2}) \|f\|_\infty,\qquad \forall\, t>0, \ f\in B_b(\mathbb{R}^d), 
\end{equation}
where $C=C_{\alpha,\beta}$ only depends on $\alpha$ and $\beta$.
\end{theorem}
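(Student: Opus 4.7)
The plan is to exploit Corollary~\ref{C32} together with the singular integral representation of the fractional Laplacian. Set $\gamma := \alpha-\beta$; the hypothesis $\beta\in(\alpha/2,\alpha)$ gives $\gamma\in(0,\alpha/2)$, and in particular $\gamma<1$. For sufficiently regular $g$ one has
$$(-\Delta)^{\gamma/2} g(x)=c_{d,\gamma}\,\mathrm{P.V.}\int_{\R^d}\frac{g(x)-g(x+y)}{|y|^{d+\gamma}}\,\dif y,$$
so $|(-\Delta)^{\gamma/2}g(x)|$ is dominated pointwise by $c_{d,\gamma}\int_{\R^d}|y|^{-d-\gamma}|g(x+y)-g(x)|\,\dif y$. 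I would apply this with $g=P_tf$ and establish the estimate first for $f\in UC_b^2(\R^d)$, for which $P_tf$ is smooth enough that both sides are unambiguous, and then pass to general $f\in B_b(\R^d)$ by exactly the approximation argument used at the end of the proof of Theorem~\ref{T31}.

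The core of the argument is a scale decomposition at a level $\varepsilon\in(0,K]$ chosen independently of $t$. On the outer region $\{|y|\ge\varepsilon\}$, the crude bound $|P_tf(x+y)-P_tf(x)|\le 2\|f\|_\infty$ yields
$$\int_{|y|\ge\varepsilon}\frac{|P_tf(x+y)-P_tf(x)|}{|y|^{d+\gamma}}\,\dif y\le C\|f\|_\infty\,\varepsilon^{-\gamma}.$$
On the inner region $\{|y|<\varepsilon\}$, I rewrite $|y|^{-d-\gamma}=|y|^{\alpha-\gamma}\cdot|y|^{-d-\alpha}$ to convert the Lebesgue integral into an integral against $\nu$, and then use Cauchy--Schwarz together with Corollary~\ref{C32}:
$$\int_{|y|<\varepsilon}\frac{|P_tf(x+y)-P_tf(x)|}{|y|^{d+\gamma}}\,\dif y\le\lf(\frac{P_tf^2(x)}{t}\rg)^{1/2}\lf(\int_{|y|<\varepsilon}|y|^{2(\alpha-\gamma)}\,\nu(\dif y)\rg)^{1/2}.$$
The last factor equals $C\bgg(\int_0^\varepsilon r^{\alpha-2\gamma-1}\,\dif r\bgg)^{1/2}=C'\varepsilon^{(\alpha-2\gamma)/2}$, where \emph{the crucial role} of the hypothesis $\beta>\alpha/2$ appears: the radial integral is finite precisely when $\alpha-2\gamma=2\beta-\alpha>0$.

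Combining the two pieces gives
$$|(-\Delta)^{\gamma/2}P_tf(x)|\le C_{\alpha,\beta}\|f\|_\infty\lf(\varepsilon^{-\gamma}+t^{-1/2}\varepsilon^{(\alpha-2\gamma)/2}\rg),$$
and choosing $\varepsilon=\min(1,K/2)$, a positive constant once $K$ is fixed, yields exactly \eqref{e:DelPtf}. I expect the main obstacle to be the justification step: for merely bounded measurable $f$ one has no a~priori Hölder regularity of $P_tf$ sufficient to make the principal-value definition of $(-\Delta)^{\gamma/2}P_tf$ absolutely convergent near $y=0$. This is overcome by first proving the pointwise bound for smooth approximants $f_n\in UC_b^2(\R^d)$ with $\|f_n\|_\infty\le\|f\|_\infty$ and $f_n\to f$ pointwise, then invoking Fatou's lemma on the $L^2(\nu)$ expression in Corollary~\ref{C32}, in direct analogy with the last step of the proof of Theorem~\ref{T31}.
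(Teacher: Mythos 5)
Your proposal is correct and follows essentially the same route as the paper: the paper applies Theorem \ref{T31} directly with the weight $q(y)=|y|^{\beta}$ on $\{|y|\le 1\}$, which is exactly your Cauchy--Schwarz step against Corollary \ref{C32} (the finiteness of $\int_{\{|y|\le\varepsilon\}}|y|^{2\beta}\nu(\dif y)$, i.e.\ $2\beta>\alpha$, playing the identical role), and it bounds the outer region by the trivial $2\|f\|_\infty$ estimate just as you do. The only cosmetic differences are your cutoff at $\varepsilon=\min(1,K/2)$ in place of the paper's normalization $K=1$ and your more explicit discussion of the approximation step for general $f\in B_b(\mathbb{R}^d)$.
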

\begin{proof}
Without any loss of generality, we may assume that $K=1$. Choose $q$ such that
$$
q(y)=|y|^\beta \qquad \forall \, |y| \le 1, \qquad   \int_{\{|y|>1\}} q^2(y) \nu(\dif y)<\infty. 
$$
It is easy to see that 
\begin{equation} \label{e:q2}
\int_{\{|y| \le 1\}} q^2(y) \nu(\dif y)=\int_{\{|y| \le 1\}} |y|^{-d-\alpha+2 \beta} \dif y \le C_{\alpha, \beta},
\end{equation}
where $C_{\alpha,\beta}>0$ depends on $\alpha, \beta$. Therefore, $q \in L^2(\nu):= L^2(\mathbb{R}^d,\mathcal{B}(\mathbb{R}^d), \nu)$.

Observe that 
$$
A_q f(x)=\int_{\{|y| \le 1\}} \frac{f(x+y)-f(x)}{|y|^{\alpha+d-\beta}} \dif y+\int_{\{|y|>1\}} \left[f(x+y)-f(x)\right] q(y) \nu(\dif y)
$$
and
\begin{align*}
\left\vert (-\Delta)^{\frac{\alpha-\beta}2} f(x)\right\vert &=\left|\int_{\R^d} \frac{f(x+y)-f(x)}{|y|^{\alpha+d-\beta}} \dif y\right| \\
& \le \left|\int_{\{|y|>1\}} \frac{f(x+y)-f(x)}{|y|^{\alpha+d-\beta}} \dif y\right|+\left|\int_{\{|y| \le 1\}} \frac{f(x+y)-f(x)}{|y|^{\alpha+d-\beta}} \dif y\right|.
\end{align*}
It is easy to see that
$$
\left|\int_{\{|y|>1\}} \frac{f(x+y)-f(x)}{|y|^{\alpha+d-\beta}} \dif y\right| \le c_{\alpha,\beta} \|f\|_\infty,
$$
where $c_{\alpha,\beta}>0$ depends on $\alpha, \beta$, and that
\begin{align*}
\left|\int_{\{|y| \le 1\}} \frac{f(x+y)-f(x)}{|y|^{\alpha+d-\beta}} \dif y\right| & \le \left|A_q f(x)\right|+\left|\int_{\{|y|>1\}} \left[f(x+y)-f(x)\right] q(y) \nu(\dif y)\right| \\
& \le t^{-1/2} \|f\|_\infty \|q\|_{L^2(\nu)}+2 \|f\|_\infty \|q\|_{L^2(\nu)},
\end{align*}
where the last inequality follows from Theorem \ref{T31} and H\"{o}lder's inequality. Collecting the previous inequalities, we  get the desired one.
\end{proof}

\section{Application 2: Estimate for a perturbed dynamics studied in \cite{ChWa12}}
Let $X_t(x)$ be the value at $t$ of the solution to the following stochastic differential equation
\begin{equation} \label{e:PertEqn}
\dif X_t=b(X_{t-}) \dif Y_t+\dif Z_t, \qquad  X_0=x,
\end{equation}
where $b \in C^2_b(\mathbb{R}^d)$ and $Z_t$, $Y_t$ are both symmetric stable processes with the parameters $\alpha, \beta \in (0,2)$. Our result below is also true if $b$ is a bounded measurable function, to avoid the complicated differentiability issue and stress the idea, we assume $b \in UC^2_b(\R^d)$.

Eq. \eqref{e:PertEqn} in more general setting has been studied by Chen and Wang (\cite{ChWa12}) and has a unique weak solution.  Let 
$$
P_t f(x)=\E f(X_t(x)), \qquad \forall \, f \in B_b(\R^d),
$$
be the corresponding transition semigroup.  The semigroup is $C_0$ on the space $UC_b(\mathbb{R}^d)$.  Let $\mathcal L$ be the generator of $(P_t)$ considered on $UC_b(\mathbb{R}^d)$. It is well known that $UC^2_b(\R^d) \subset\text{\rm  Dom}\, (\mcl L)$,  and that for all $f \in UC^2_b(\R^d)$,  
$$
\mathcal Lf=-(-\Delta)^{\alpha/2}f-|b(x)|^{\beta} (-\Delta)^{\beta/2}f,
$$
and the following backward Kolmogorov equation holds 
\begin{equation} \label{e:PtEqn}
\partial _t P_t f=\mathcal L P_tf. 
\end{equation}
We shall use \eqref{e:DelPtf} to show some properties of the associated backward Kolmogorov equation.

\begin{theorem}
If $\beta \in (0,\alpha/2)$, then there exists a  $t_0\in (0,1)$ depending on $\alpha, \beta$ and $\|b\|_{\infty}$, such that for any  $f \in UC_b(\R^d)$,
\begin{equation}\label{E53}
\begin{split}
& \|(-\Delta)^{\beta/2} P_tf\|_\infty \le C_1 t^{-1/2}\|f\|_\infty, \qquad  t \le t_0, \\
& \|(-\Delta)^{\beta/2} P_tf\|_\infty \le  C_2 \|f\|_\infty, \qquad t > t_0,
\end{split}
\end{equation}
where $C_1, C_2$ depend on $\alpha$, $\beta$ and $t_0$.
\end{theorem}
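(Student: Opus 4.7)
The plan is to treat the multiplicative non-local drift as a perturbation of the pure $\alpha$-stable semigroup. Writing $\mathcal L = \mathcal L_0 + V$ with $\mathcal L_0 := -(-\Delta)^{\alpha/2}$ and $Vh := -|b|^\beta (-\Delta)^{\beta/2} h$, and letting $(P^0_t)$ denote the semigroup generated by $\mathcal L_0$, the variation-of-constants formula derived from \eqref{e:PtEqn} gives
\begin{equation*}
P_t f = P^0_t f - \int_0^t P^0_{t-s}\bigl(|b|^\beta (-\Delta)^{\beta/2} P_s f\bigr)\,\dif s.
\end{equation*}
I would first justify this identity for $f \in UC^2_b(\R^d)$ using \eqref{e:PtEqn} together with the analogous Kolmogorov equation for $P^0_t$, and then extend it to $f \in UC_b(\R^d)$ by the density-and-Fatou step used at the end of the proof of Theorem \ref{T31}.

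Set $g(t) := \|(-\Delta)^{\beta/2} P_t f\|_\infty$. Applying $(-\Delta)^{\beta/2}$ to both sides of the Duhamel identity produces
\begin{equation*}
g(t) \le \|(-\Delta)^{\beta/2} P^0_t f\|_\infty + \|b\|_\infty^\beta \int_0^t \|(-\Delta)^{\beta/2} P^0_{t-s} h_s\|_\infty\,\dif s
\end{equation*}
with $\|h_s\|_\infty \le \|b\|_\infty^\beta g(s)$. The hypothesis $\beta \in (0,\alpha/2)$ is equivalent to $\alpha-\beta \in (\alpha/2,\alpha)$, so the preceding theorem (with $K=\infty$) delivers the pure-stable estimate
\begin{equation*}
\|(-\Delta)^{\beta/2} P^0_r h\|_\infty \le C(1+r^{-1/2}) \|h\|_\infty, \qquad r>0.
\end{equation*}
Plugging this in and restricting to $t \le 1$ so that $1+r^{-1/2} \le 2r^{-1/2}$, I arrive at the singular integral inequality
\begin{equation*}
g(t) \le 2C \|f\|_\infty\, t^{-1/2} + 2C \|b\|_\infty^\beta \int_0^t (t-s)^{-1/2} g(s)\,\dif s.
\end{equation*}

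The short-time bound follows from a weakly-singular Gronwall argument. The substitution $u(t) := t^{1/2} g(t)$ together with the change of variables $s = t\tau$ turns the inequality into
\begin{equation*}
u(t) \le 2C \|f\|_\infty + 2C \|b\|_\infty^\beta\, t^{1/2} \int_0^1 (1-\tau)^{-1/2} \tau^{-1/2} u(t\tau)\,\dif\tau,
\end{equation*}
and bounding $u(t\tau)$ by $U(t) := \sup_{0<s\le t} u(s)$ together with $\int_0^1 (1-\tau)^{-1/2}\tau^{-1/2}\,\dif\tau = \pi$ gives $U(t) \le 2C \|f\|_\infty + 2\pi C \|b\|_\infty^\beta t^{1/2} U(t)$. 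Choosing $t_0 \in (0,1)$ so that $2\pi C \|b\|_\infty^\beta t_0^{1/2} \le 1/2$ forces $U(t) \le 4C \|f\|_\infty$ for $t \in (0,t_0]$, which is the first line of \eqref{E53} with $C_1 = 4C$. For $t>t_0$, the semigroup law and $\|\cdot\|_\infty$-contractivity of $P_s$ give
\begin{equation*}
\|(-\Delta)^{\beta/2} P_t f\|_\infty = \|(-\Delta)^{\beta/2} P_{t_0}(P_{t-t_0} f)\|_\infty \le C_1 t_0^{-1/2} \|f\|_\infty,
\end{equation*}
which is the second line with $C_2 := C_1 t_0^{-1/2}$.

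The main obstacle I foresee is the a priori finiteness of $U(t)$, without which the Gronwall closure is circular. I would address this by first proving \eqref{E53} for $f \in UC^2_b(\R^d)$, where $(-\Delta)^{\beta/2} f$ is a genuine bounded continuous function (because $\beta<2$), so that $(-\Delta)^{\beta/2} P_s f = P_s (-\Delta)^{\beta/2} f + [\,(-\Delta)^{\beta/2}, P_s\,] f$ is finite and bounded on bounded time intervals by standard semigroup arguments, and only afterwards passing to general $f \in UC_b(\R^d)$ by the density step from Theorem \ref{T31}.
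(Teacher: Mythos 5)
Your proposal is correct and follows essentially the same route as the paper: the same Duhamel decomposition around the pure $\alpha$-stable semigroup, the same invocation of the preceding theorem (with $K=\infty$ and $\beta$ replaced by $\alpha-\beta$) for the bound $\|(-\Delta)^{\beta/2}P^0_rh\|_\infty\le Cr^{-1/2}\|h\|_\infty$, the same weighted-supremum singular Gronwall closure (your constant $B(1/2,1/2)=\pi$ is in fact the correct value of $\int_0^t s^{-1/2}(t-s)^{-1/2}\,\dif s$), and the same reduction to $f\in UC^2_b(\R^d)$ to secure a priori finiteness. The only deviation is the long-time step, where you pass to $t>t_0$ in one application of the semigroup law $P_t=P_{t_0}P_{t-t_0}$ rather than the paper's iteration in increments of $t_0/2$; both yield the stated $C_2$.
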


\begin{proof}
Since $UC^2_b(\R^d)$ is dense in $UC_b(\R^d)$ and $(-\Delta)^{\beta/2}$ is closable,  it suffices to show \eqref{E53}  for $f \in UC^2_b(\R^d)$.

 For any $f \in UC^2_b(\R^d)$, define $P^0_t f(x)=\E[f(Z_t+x)]$. It satisfies
 \begin{equation}\label{e:P0tEqn}
 \p_t P^0_t f=-(-\Delta)^{\alpha/2} P^0_t f.
 \end{equation}
$(P^0_t)_{t \ge 0}$ can be extended to  a Markov $C_0$-semigroup on $UC_b(\R^d)$. Thanks to \eqref{e:PtEqn} and \eqref{e:P0tEqn}, using the classical Duhamel principle we obtain
\begin{equation} \label{e:DuHam}
P_t f(x)=P^0_t f(x)-\int_0^t P^0_{t-s} [|b|^\beta (-\Delta)^{\beta/2}  P_s f](x) \dif s.
\end{equation}
Since $b \in UC^2_b(\R^d)$ and $f \in UC^2_b(\R^d)$, $P_t f \in UC^2_b(\R^d)$ and $P^0_t f \in U C^2_b(\R^d)$ both hold. By \eqref{e:DelPtf}, we have
\begin{equation}
\|(-\Delta)^{\beta/2} P^0_t f\|_\infty \le C(1+t^{-1/2}) \|f\|_\infty \le C t^{-1/2} \|f\|_\infty,  \qquad  \forall \, t<1,
\end{equation}
which, together with \eqref{e:DuHam}, yields
\begin{equation*}
\begin{split}
\|(-\Delta)^{\beta/2} P_t f\|_\infty & \le Ct^{-1/2} \|f\|_\infty+C\int_0^t (t-s)^{-1/2} \|b\|^\beta_\infty \|(-\Delta)^{\beta/2} P_s f\|_\infty \dif s \\
& \le Ct^{-1/2} \|f\|_\infty+C\int_0^t (t-s)^{-1/2} \|b\|^\beta_\infty \|(-\Delta)^{\beta/2} P_s f\|_\infty \dif s \\
& =Ct^{-1/2} \|f\|_\infty+C\int_0^t (t-s)^{-1/2} s^{-1/2}\|b\|^\beta_\infty s^{1/2} \|(-\Delta)^{\beta/2} P_s f\|_\infty \dif s.
\end{split}
\end{equation*}
Define
$$
L_{T}:=\sup_{0 \le t \le T} t^{1/2} \|(-\Delta)^{\beta/2} P_t f\|_\infty
$$
with $T>0$ to be chosen later. From the previous inequality we have
\Be
\begin{split}
L_{T} & \le C  \|f\|_\infty+ C T^{\frac 12} \sup_{0 \le t \le T} \int_0^t s^{-\frac 12} (t-s)^{-\frac 12} \dif s L_{T} \\
&=C  \|f\|_\infty+ C B(3/2, 3/2) T^{\frac 12} \|b\|^\beta_\infty\, L_{T},
\end{split}
\Ee
where $B$ is the beta function. Choose a  $t_0 \in (0,1)$ (depending on $\alpha$, $\beta$, and $\|b\|^\beta_\infty$) such that $C B(3/2, 3/2) t_0^{\frac 12}\|b\|^\beta_\infty<\frac 12$, we obtain 
$$ 
L_{t_0} \le 2 C \|f\|_\infty.
$$
This immediately implies the first estimate in the theorem.

For the second estimate, taking $P_{t_0/2} f$ rather than $f$ as the initial data, by the same procedure as above we have
\Bes
\begin{split}
\|(-\Delta)^{\beta/2} P_tf\|_\infty & \le C_1 \left(t-\frac{t_0}2\right)^{-\frac 12}\|P_{t_0/2}f\|_\infty \\
& \le C_1 \left(t-\frac{t_0}2\right)^{-\frac 12}\|f\|_\infty, \qquad  \forall  \, t \in (t_0/2, 3t_0/2).
\end{split}
\Ees
Therefore
$$\|(-\Delta)^{\beta/2} P_tf\|_\infty \le C_1 \left(\frac{t_0}2\right)^{-\frac 12}\|f\|_\infty,  \qquad  \forall  \,  t \in (t_0, 3t_0/2).$$
Now taking $P_{t_0} f$ as the initial data, we obtain 
$$
\|(-\Delta)^{\beta/2} P_tf\|_\infty \le C_1 \left(\frac{t_0}2\right)^{-\frac 12}\|f\|_\infty,  \qquad  \forall  \, t \in (3t_0/2,2t_0).
$$
Iterating the above argument, we finally get
$$
\|(-\Delta)^{\beta/2} P_tf\|_\infty \le C_1 \left(\frac{t_0}2\right)^{-\frac 12}\|f\|_\infty, \qquad  \forall \, t \ge t_0,
$$
which is the desired second estimate.
\end{proof}


\section{Application 3: modulus of continuity of transition semugroup of  $log$ $\alpha$-stable processes}
When the L\'evy measure $\nu$ on $\mathbb{R}^d$  satisfies
\begin{equation} \label{e:InfSmaJum}
\int_{\{|x|<r\}} \nu(\dif x)=\infty, \qquad  \forall \,  r>0,
\end{equation}
the process $(L_t)_{t \ge 0}$ has infinite small jumps in any time interval $[t,t+\delta)$. Then (see e.g. \cite{Sa90}), the law $\mathfrak{L}(L_t)$ of $L_t$ is continuous but not necessarily absolutely continuous with respect to Lebesgue measure. However, if additionally $\nu$ is absolutely continuous then $\mathfrak{L}(L_t)$ is absolutely continuous. Let us also recall, see e.g.  \cite{Hawkes}, that the law $\mathfrak{L}(L_t)$ is absolutely continuous if and only if for the corresponding semigroup satisfies $P_t\colon B_b(\mathbb{R}^d)\mapsto C_b(\mathbb{R}^d)$. Thus  the absolute continuity is equivalent to  strong Feller property. 

Below we provide some estimates for the moduli of continuity of the transition semigroup which is beyond the scope of $\alpha$-stable type process. Namely, assume that the L\'evy measure $\nu$ is absolutely continuous with respect to Lebesgue measure on the ball $B_1(0)=\{x\colon |x|<1\}$ and
\begin{equation} \label{e:NewNu}
\frac{\nu(\dif x)}{\dif x} \ge \frac{|\log_2|x||^{2\alpha}}{|x|^d}, \qquad  x \in B_1(0),
\end{equation}
where $\alpha \in (1,\infty)$ is a constant.

\begin{thm} \label{t:SStrFel}
Let $(L_t)_{t \ge 0}$ be a L\'evy process with L\'evy measure $\nu$ satisfying
\eqref{e:NewNu}. Then $(L_t)_{t \ge 0}$ is strong Feller. Moreover, there exists an  $r_0>0$ such that
\begin{equation}
|P_t f(x)-P_t f(y)| \le \frac{C}{|\log_2 |x-y||^{\alpha-1}}, \qquad |x-y| \le r_0,
\end{equation}
where $C$ depends on $\alpha, d, t$ and $r_0$.
\end{thm}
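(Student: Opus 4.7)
The plan is to feed the key inequality of Corollary \ref{C32} into the hypothesis \eqref{e:NewNu} on the L\'evy density, thereby producing a strong weighted mean-square oscillation bound on $g := P_t f$, and then to invoke the generalized Campanato theorem of the appendix in order to convert this bound into the pointwise modulus of continuity. Since $P_t f$ is bounded by $\|f\|_\infty$, continuity on a neighbourhood of each point combined with this uniform bound will give $P_t f \in C_b(\R^d)$, i.e.\ the strong Feller property; thus it suffices to prove the displayed modulus of continuity estimate.

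Setting $g := P_t f$, Corollary \ref{C32} gives
\[
\int_{\R^d} |g(x+y) - g(x)|^2 \, \nu(\dif y) \le \frac{1}{t}\|f\|_\infty^2, \qquad x\in\R^d.
\]
Restricting to $B_1(0)$ and using \eqref{e:NewNu} yields the weighted oscillation estimate
\[
\int_{B_1(0)} |g(x+y) - g(x)|^2 \frac{|\log_2|y||^{2\alpha}}{|y|^d}\, \dif y \le \frac{\|f\|_\infty^2}{t}.
\]
For any $0 < \rho < 1/2$, further restricting to $B_\rho(0)$ and using the pointwise lower bound $\frac{|\log_2|y||^{2\alpha}}{|y|^d} \ge \frac{|\log_2\rho|^{2\alpha}}{\rho^d}$ on $B_\rho$, one obtains the Morrey--Campanato-type bound
\[
\frac{1}{|B_\rho|}\int_{B_\rho(x)} |g(z) - g(x)|^2\, \dif z \le \frac{C\|f\|_\infty^2}{t\, |\log_2\rho|^{2\alpha}}, \qquad x\in\R^d,
\]
in which the usual polynomial decay in $\rho$ is replaced by a logarithmic one.

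The remaining step is to apply the generalized Campanato theorem from the appendix, which is tailored to convert such logarithmic mean-oscillation bounds into pointwise regularity. Schematically, introducing the dyadic averages $\bar g_k(x) := |B_{\rho_k}|^{-1}\int_{B_{\rho_k}(x)} g(z)\,\dif z$ with $\rho_k = 2^{-k}$, the Cauchy--Schwarz inequality together with the bound above controls each one-scale increment $|\bar g_k(x) - \bar g_{k+1}(x)|$ by $C/k^\alpha$. Summing from $k_0$ to $\infty$ (which requires $\alpha > 1$) gives $|g(x) - \bar g_{k_0}(x)| \le C/k_0^{\alpha-1}$, and an analogous cross-centre comparison at scale $\rho_{k_0} \approx |x-x'|$ controls $|\bar g_{k_0}(x) - \bar g_{k_0}(x')|$ by the same $C/k_0^{\alpha-1}$. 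Choosing $k_0 \approx |\log_2|x-x'||$ and combining the three contributions produces the stated bound.

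The main technical point will be the cross-centre comparison step, where one must ensure that the constant is genuinely uniform in $x$ and that the symmetric-difference contribution $B_{\rho_{k_0}}(x)\triangle B_{\rho_{k_0}}(x')$ can be absorbed without degrading the scale $1/k_0^{\alpha-1}$; this is precisely what the generalized Campanato theorem of the appendix is designed to handle. The loss of one power of the logarithm in the final exponent $\alpha - 1$, as compared with the $\alpha$ appearing in the oscillation bound, reflects exactly the dyadic summation of the one-scale increments, and is the reason for the hypothesis $\alpha > 1$.
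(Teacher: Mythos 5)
Your proposal is correct and follows essentially the same route as the paper: feed Corollary \ref{C32} into the density lower bound \eqref{e:NewNu}, use the monotonicity of $r\mapsto r^{d}/|\log_2 r|^{2\alpha}$ for small $r$ to extract a logarithmic Campanato-type bound on $P_tf$, and conclude via Lemma \ref{t:GMT}. Your only (harmless, slightly cleaner) deviation is bounding the oscillation around the pointwise value $g(x)$ rather than around the mean $\bar g_{x,r}$ — since the mean minimizes the $L^2$ distance to constants, this dominates the Campanato seminorm and spares you the paper's double-integral step with the inclusion $\{|y-x|\le r\}\cap\{|z-x|\le r\}\subset\{|y-z|\le 2r\}$.
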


Let $\Omega \subset \R^d$ be an open set.  Given a function $f\colon \Omega \to \mathbb{R}$, $x\in \Omega$ and $r>0$ such that the ball $B_r(x)\subset \Omega$, define 
$$
\bar f_{x,r}:=\frac{1}{|B_r(x)|} \int_{B_r(x)} f(y) \dif y. 
$$ 
To show the theorem, we need to use the following lemma, which is a generalized Campanato theorem. The proof of the  lemma  is postponed to the appendix. 
\begin{lem}
\label{t:GMT}
Let $\Omega \subset \R^d$ be open and bounded, and let  $f\colon\Omega \rightarrow \R$ be  a bounded function. Assume that there are constants $C>0$  and  $\alpha >1$  such that 
\Be \label{e:MorCon}
\int_{B_r(x)} |f(y)-\bar f_{x,r}|^2 \dif y \le C \frac{r^{d}}{|\log_2 r|^{2\alpha}},
\Ee
for any ball $B_r(x) \subset \Omega$. Then $f$ is uniformly continuous and there exists an  $r_0>0$ such that for any open $\tl \Omega  \subset \Omega$ with $\text{\rm diam}\,(\tl \Omega)<r_0$ and $\text{\rm dist}\, (\tl \Omega, \p \Omega)>r_0$, we have
\begin{equation} \label{e:MorRes}
\sup_{x,y \in \tl \Omega, x \ne y} |f(x)-f(y)| |\log_2 |x-y||^{\alpha-1} \le \tilde C,
\end{equation}
where $\tilde C$ depends on $C$, $\text{\rm dist}\,(\tl \Omega, \p \Omega), d, \alpha$ and $r_0$.
\end{lem}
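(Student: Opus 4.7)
The plan is to adapt the classical Campanato/Meyers ball-average iteration, with the polynomial modulus $r^\gamma$ replaced by the logarithmic one $|\log_2 r|^{-\alpha}$, and then exploit the summability $\sum k^{-\alpha}<\infty$ that $\alpha>1$ affords.

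First, I would establish a one-step comparison between averages on nested balls. If $B_r(x)\subset\Omega$ then, using $B_{r/2}(x)\subset B_r(x)$ and the Cauchy--Schwarz estimate
\begin{equation*}
|\bar f_{x,r/2}-\bar f_{x,r}|^2\le \frac{1}{|B_{r/2}(x)|}\int_{B_{r/2}(x)}|f(y)-\bar f_{x,r}|^2\,\dif y,
\end{equation*}
the hypothesis \eqref{e:MorCon} yields $|\bar f_{x,r/2}-\bar f_{x,r}|\le C_1/|\log_2 r|^{\alpha}$ for all $r$ small enough that $|\log_2 r|\ge 1$. Iterating this over the dyadic scales $r_k=2^{-k}r$ gives
\begin{equation*}
|\bar f_{x,r_n}-\bar f_{x,r_m}|\le C_1\sum_{k=m}^{n-1}\frac{1}{(k+|\log_2 r|)^\alpha},
\end{equation*}
and since $\alpha>1$ the tail is bounded by $C_2/|\log_2 r_m|^{\alpha-1}$ via comparison with $\int t^{-\alpha}\dif t$. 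Hence $(\bar f_{x,r_n})_n$ is Cauchy; set $F(x):=\lim_n\bar f_{x,r_n}$. By the Lebesgue differentiation theorem $F=f$ almost everywhere, so it suffices to prove the modulus of continuity estimate and uniform continuity for $F$.

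Next I would compare $F(x)$ and $F(y)$ for nearby points. Fix $r_0>0$ smaller than the separation distance hypothesis, take $x,y\in\tilde\Omega$ with $\rho:=|x-y|<r_0/4$, and use the inclusion $B_\rho(y)\subset B_{2\rho}(x)\subset\Omega$. The same averaged-inclusion argument as in the one-step estimate gives
\begin{equation*}
|\bar f_{y,\rho}-\bar f_{x,2\rho}|^2\le \frac{1}{|B_\rho(y)|}\int_{B_{2\rho}(x)}|f(z)-\bar f_{x,2\rho}|^2\,\dif z\le \frac{C_3}{|\log_2(2\rho)|^{2\alpha}}.
\end{equation*}
Then by the triangle inequality
\begin{equation*}
|F(x)-F(y)|\le |F(x)-\bar f_{x,2\rho}|+|\bar f_{x,2\rho}-\bar f_{y,\rho}|+|F(y)-\bar f_{y,\rho}|,
\end{equation*}
and the first and third terms are controlled by the iteration bound $C_2/|\log_2\rho|^{\alpha-1}$ (starting the dyadic chain at $2\rho$ and $\rho$ respectively), while the middle term is of smaller order $O(|\log_2\rho|^{-\alpha})$. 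Summing gives the desired estimate \eqref{e:MorRes}, and the same bound shows $F$ is uniformly continuous on $\tilde\Omega$.

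The routine points are choosing $r_0$ small so that $|\log_2 r_0|\ge 2$ (keeping all logarithms positive and bounded away from zero on the relevant scales) and verifying that the limit $F(x)$ is independent of the choice of starting radius, which follows directly from the one-step bound. The main delicate point I expect is the bookkeeping of the iteration sum: one must confirm that $\sum_{k\ge m}(k+|\log_2 r|)^{-\alpha}\asymp |\log_2 r|^{-(\alpha-1)}$ as $\rho\to 0$, and keep track of the dependence of all constants on $d$, $\alpha$, $C$, $\operatorname{dist}(\tilde\Omega,\partial\Omega)$ and $r_0$. Once this is cleanly organized, the argument is essentially the classical Campanato embedding with the sole new ingredient being that $\alpha>1$ is exactly what makes the logarithmic series converge, explaining the loss of one power in the final exponent $\alpha-1$.
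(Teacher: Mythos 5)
Your proposal is correct and follows essentially the same route as the paper's proof: a dyadic one-step comparison of ball averages via Cauchy--Schwarz, summation of the resulting series $\sum k^{-\alpha}$ (convergent precisely because $\alpha>1$, giving the loss of one power), identification of the limit with $f$ via the Lebesgue differentiation theorem, and a final triangle inequality through averages at scale comparable to $|x-y|$. The only cosmetic difference is that in the last step you use the inclusion $B_\rho(y)\subset B_{2\rho}(x)$ where the paper uses that $B_{2r}(x)\cap B_{2r}(y)$ contains a ball of radius $r$; these are interchangeable.
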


\begin{proof} [Proof of Theorem \ref{t:SStrFel}]
Let $f \in B_b(\R^d)$ and set $g(x)=P_t f(x)$. For any $t>0$, we have
\begin{equation} \label{e:MorPro}
\begin{split}
\int_{B_r(x)} |g(y)-\bar g_{x,r}|^2 \dif y
&=\int_{\{|y-x| \le r\}} \frac{1}{|B_r(x)|^2}\left|\int_{\{|z-x| \le r\}} \left(g(y)-g(z)\right) \dif z\right|^2 \dif y \\
& \le \int_{\{|y-x| \le r\}} \frac{1}{|B_r(x)|} \int_{\{|z-x| \le r\}} \left|g(y)-g(z)\right|^2 \dif z \dif y  \\
& \le \int_{\{|y-x| \le r\}} \frac{1}{|B_r(x)|} \int_{\{|y-z| \le 2 r\}} \left|g(y)-g(z)\right|^2  \dif z \dif y,
\end{split}
\end{equation}
where the last inequality follows from  the inclusion 
$$
\{|y-x| \le r\} \cap \{|z-x| \le r\} \subset \{|y-x| \le r\} \cap \{|y-z| \le 2r\}.
$$
From \eqref{e:MorPro} we further get
\begin{align*}
& \ \ \ \int_{B_r(x)} |g(y)-\bar g_{x,r}|^2 \dif y \\
& \le \int_{\{|y-x| \le r\}} \frac{1}{|B_r(x)|} \int_{\{|y-z| \le 2 r\}} \left|g(y)-g(z)\right|^2
\frac{|z-y|^{d}}{|\log_2 |y-z||^{2\alpha}} \frac{|\log_2 |z-y||^{2\alpha}}{|z-y|^{d}}\dif z \dif y \\
&=\int_{\{|y-x| \le r\}} \frac{1}{|B_r(x)|} \int_{\{|z| \le 2 r\}} \left|g(y+z)-g(y)\right|^2
\frac{|z|^{d}}{|\log_2 |z||^{2\alpha}} \frac{|\log_2 |z||^{2\alpha}}{|z|^{d}}\dif z \dif y  \\
& \le \int_{\{|y-x| \le r\}} \frac{1}{|B_r(x)|} \int_{\{|z| \le 2 r\}} \left|g(y+z)-g(y)\right|^2  \frac{|z|^{d}}
{|\log_2 |z||^{2\alpha}} \nu(\dif z) \dif y
\end{align*}
Since $\frac{r^{d}}
{|\log_2 r|^{2\alpha}}$ is decreasing as $r<r_0/2$ for small $r_0>0$, the above inequality further gives
\Bes
\begin{split}
\int_{B_r(x)} |g(y)-\bar g_{x,r}|^2 \dif y & \le C\frac{r^{d}}
{|\log_2 r|^{2\alpha}}  \frac{1}{|B_r(x)|} \int_{\{|y-x| \le r\}}\int_{\{|z| \le 2 r\}} \left|g(y+z)-g(y)\right|^2  \nu(\dif z) \dif y \\
& \le C\frac{r^{d}}
{|\log_2 r|^{2\alpha}}  \sup_{y \in B_r(x)}  \int_{\R^d} \left|g(y+z)-g(y)\right|^2  \nu(\dif z) \\
&  \le C\frac{r^{d}}
{|\log_2 r|^{2\alpha}} t^{-1} \|f\|^2_\infty,
\end{split}
\Ees
where the last inequality follows from  Corollary \ref{C32}. Combining the estimate above and  Lemma \ref{t:GMT} we obtain the desired conclusion.
\end{proof}

\section{Appendix: Proof of Lemma \ref{t:GMT}}
 We shall follow \cite{HaLi11}. For $0<r_2<r_1<\min\{\text{\rm dist}\, (\tl \Omega, \p \Omega),1\}$ and any $x \in \tl \Omega$, we have
\begin{equation} \label{e:CaufBar}
\begin{split}
|\bar f_{x,r_1}-\bar f_{x,r_2}| &\le \frac{1}{|B_{r_2}|} \int_{B_{r_2}(x)} |f(y)-\bar f_{x,r_2}| \dif y+\frac{1}{|B_{r_2}|} \int_{B_{r_2}(x)} |f(y)-\bar f_{x,r_1}| \dif y \\
& \le \sqrt{\frac{1}{|B_{r_2}|} \int_{B_{r_2}(x)} |f(y)-\bar f_{x,r_2}|^2 \dif y}+\sqrt{\frac{1}{|B_{r_2}|} \int_{B_{r_2}(x)} |f(y)-\bar f_{x,r_1}|^2 \dif y} \\
&\le \sqrt{\frac{1}{|B_{r_2}|} \int_{B_{r_2}(x)} |f(y)-\bar f_{x,r_2}|^2 \dif y}+\sqrt{\frac{|B_{r_1}|}{|B_{r_2}|}\frac{1}{|B_{r_1}|} \int_{B_{r_1}(x)} |f(y)-\bar f_{x,r_1}|^2 \dif y} \\
&\le C\left[\frac{1}{|\log_2 r_2|^{\alpha}}+\left(\frac{r_1}{r_2}\right)^{d/2} \frac{1}{|\log_2 r_1|^\alpha}\right] \\
& \le C\left[1+\left(\frac{r_1}{r_2}\right)^{d/2}\right] \frac{1}{|\log_2 r_1|^\alpha},
\end{split}
\end{equation}
where the last inequality is by the assumption of  the lemma. For all $0<r_n<r_m<\text{\rm dist}\,(\tl \Omega, \p \Omega)$, define
$$N:=\left[\log_2 \left(\frac{r_m}{r_n}\right)\right],$$
without loss of generality we assume $r_m<1/2$.
By \eqref{e:CaufBar} we have
\begin{equation}
\begin{split}
|\bar f_{x,r_n}-\bar f_{x,r_m}| &\le \sum_{k=1}^{N} |\bar f_{x,2^{-k} r_m}-\bar f_{x,2^{-k+1}r_m}|+|\bar f_{x,2^{-N} r_m}-\bar f_{x,r_n}| \\
& \le C_d \sum_{k=1}^{N} \frac{1}{|k-1-\log_2 r_m|^\alpha}+C_d\frac{1}{|N-\log_2 r_m|^\alpha} \\
& \le C_{d,\alpha} \frac{1}{|\log_2 r_m|^{\alpha-1}}.
\end{split}
\end{equation}
Hence, there exists an  $\tl f$ such that
$$
\lim_{r \rightarrow 0} \bar f_{x,r}=\tl f(x), \qquad  \forall \, x \in \tl\Omega,
$$
and there exists an  $r_0>0$ such that as $r<r_0$,
\begin{equation}  \label{e:MeaCon}
|\bar f_{x,r}-\tl f(x)| \le C_{d,\alpha} \frac{1}{|\log_2 r|^{\alpha-1}}, \qquad \forall \, x \in \tl\Omega.
\end{equation}
On the other hand, by the Lebesgue theorem,
$$
\lim_{r \rightarrow 0} \bar f_{x,r}=f(x), \qquad \text{a.s.} \ x \in \tl \Omega.
$$
By \eqref{e:MeaCon}, all the points in $\tl \Omega$ are Lebesgue points. Hence, $\bar f_{x,r} \rightarrow f(x)$ uniformly for $x \in \tl\Omega$ as $r \rightarrow 0$ with
\begin{equation}  \label{e:MeaCon1}
|\bar f_{x,r}-f(x)| \le C_{d,\alpha} \frac{1}{|\log_2 r|^{\alpha-1}}, \qquad  \forall \,   x \in \tl\Omega.
\end{equation}
Now for $x, y \in \tl \Omega$, denote $r=|x-y|$, we have
\begin{equation}
|\bar f_{y,2r}-\bar f_{x,2r}| \le  |\bar f_{y,2r}-f(z)|+|f(z)-\bar f_{x,2r}|, \qquad  z \in B_{2r}(x) \cap B_{2r}(y).
\end{equation}
Since $B_{2r}(x) \cap B_{2r}(y)$ contains a ball with radius $r$, as $r<r_0/2$,
\begin{equation} \label{e:fBarCau}
\begin{split}
|\bar f_{y,2r}-\bar f_{x,2r}| & \le  \frac{1}{|B_r|} \int_{B_{2r}(x) \cap B_{2r}(y)} |\bar f_{y,2r}-f(z)|+|f(z)-\bar f_{x,2r}| \dif z  \\
& \le \frac{1}{|B_r|} \int_{B_{2r}(y)} |\bar f_{y,2r}-f(z)| \dif z+\frac{1}{|B_r|} \int_{B_{2r}(x)}|f(z)-\bar f_{x,2r}| \dif z \\
& \le C \frac{1}{|\log_2 r|^{\alpha}},
\end{split}
\end{equation}
where the last inequality is by the assumption of  the lemma. Observe that 
\begin{equation}
|f(x)-f(y)| \le |f(x)-\bar f_{x,2r}|+|f(y)-\bar f_{y,2r}|+|\bar f_{y,2r}-\bar f_{x,2r}|.
\end{equation}
This, together with \eqref{e:fBarCau} and \eqref{e:MeaCon1}, immediately implies the desired inequality.

\bibliographystyle{amsplain}

\begin{thebibliography}{99}

\bibitem{Ap04}D. Applebaum, \emph{L\'evy Processes and Stochastic Calculus}, Cambridge Studies in Advance Mathematics 93, Cambridge University Press, 2004.

\bibitem{BWY13} J. Bao, F.Y. Wang and C. Yuan, \emph{Bismut formulae and applications for functional SPDEs}, Bull. Sci. Math. 137 (2013), no. 4, 509--522.

\bibitem{ChWa12} Z.Q. Chen and J. Wang, \emph{Perturbation by non-local operators}, preprint.

\bibitem{ElLi94} K.D. Elworthy and X.M. Li, \emph{Formulae for the derivatives of heat semigroups}, J. Funct. Anal. 125, no. 1, (1994), 252--286.

\bibitem{EcHa01} J.P. Eckmann and M. Hairer, \emph{Uniqueness of the invariant measure for a stochastic PDE driven by degenerate noise}, Commun. Math. Phys. 219 (2001), no 3, pp. 523--565


\bibitem{HaLi11}
Q. Han and F. Lin, \emph{Elliptic Partial Differential Equations}, second ed., Courant Lecture Notes in Mathematics, Vol.~1, Courant Institute of Mathematical Sciences, New York, 2011. \MR{2777537 (2012c:35077)}


\bibitem{Hawkes}
J. Hawkes, \emph{Potential theory of L\'eve processes},  Proc. London Math. Soc. 38 (1979) 335--352. 


\bibitem{PeZa95} 
S. Peszat and J. Zabczyk, \emph{Strong Feller property and irreducibility for diffusions on Hilbert spaces}, Ann. Probab. 23 (1995), 157--172.

\bibitem{PeZa07} S. Peszat and J. Zabczyk, \emph{Stochastic Partial Differential Equations with L\'{e}vy Noise (an Evolution Equation Approach)}, Cambridge University Press, Cambridge, 2007.


\bibitem{Sa90} K. Sato, \emph{L\'evy Processes and Infinitely Divisible Distributions}, Cambridge Studies in Advanced Mathematics, 68. Cambridge University Press, Cambridge, 1999. xii+486 pp.

\bibitem{Tak10} A. Takeuchi, \emph{Bismut-Elworthy-Li type formulae for stochastic differential equations with jumps}, J. Theoretical Probab., 23 (2010), no. 2, 576--604.



\bibitem{Wa11} F.Y. Wang, \emph{Gradient estimate for Ornstein-Uhlenbeck jump processes}, Stoch. Proc. Appl. 121 (2011), no. 3, 466--478.

\bibitem{Wa14}
F.Y. Wang,  \emph{Integration by parts formula and shift Harnack inequality for stochastic equations},    Ann. Probab. 42 (2014), no.  3, 994--1019.

\bibitem{WaXu12} F.Y. Wang and L. Xu, \emph{Derivative formula and applications for hyperdissipative stochastic Navier-Stokes/Burgers equations}, Infin. Dimens. Anal. Quantum Probab. Relat. Top. 15 (2012), no. 3, 1250020, 19 pp.

\bibitem{WaXuZh13} F.Y. Wang, L. Xu and X. Zhang:
\emph{Gradient estimates for SDEs driven by multiplicative Levy
noise}, arXiv 1301.4528.

\bibitem{WaYa13} F.Y. Wang and L. Yan, \emph{Gradient estimate on convex domains and applications}, Proc. Amer. Math. Soc. 141 (2013), no. 3, 1067--1081


\bibitem{Zha13} X. Zhang, \emph{Derivative formulas and gradient estimates for SDEs driven by $\alpha$-stable processes}, Stochastic Process. Appl. 123 (2013), no. 4, 1213--1228.





\end{thebibliography}

\end{document}